\newtheorem{satz}{Satz}
\newtheorem{theorem}[satz]{Theorem}
\newtheorem*{theorem*}{Theorem}
\newtheorem{lemma}[satz]{Lemma}
\newtheorem{corollary}[satz]{Corollary}
\theoremstyle{definition}
\newtheorem{definition}[satz]{Definition}
\theoremstyle{remark}
\newtheorem*{remark*}{Remark}
\newtheorem{remark}[satz]{Remark}
\newtheorem*{claim*}{Claim}
\numberwithin{equation}{section}
\numberwithin{satz}{section}
\newcommand{\tensor}{\otimes}
\newcommand{\map}[1]{\stackrel{#1}{\longrightarrow}}
\newcommand{\un}[1]{\ensuremath{\protect\underline{#1}}}
\def\GL{\mathsf{GL}}
\def\SL{\mathsf{SL}}
\def\SU{\mathsf{SU}}
\def\U{\mathsf{U}}
\DeclareMathOperator{\Bun}{Bun}
\DeclareMathOperator{\Chain}{Chain}
\DeclareMathOperator{\Higgs}{Higgs}
\DeclareMathOperator{\Stab}{Stability}
\DeclareMathOperator{\cHom}{\mathcal{H}om}
\DeclareMathOperator{\Ext}{Ext}
\DeclareMathOperator{\rank}{rank}
\DeclareMathOperator{\supp}{supp}
\DeclareMathOperator{\rk}{rk}
\DeclareMathOperator{\Var}{Var}
\DeclareMathOperator{\sst}{ss}
\DeclareMathOperator{\wt}{wt}
\def\K0hat{\widehat{K}_0(\Var_k)}
\def\1halb{\frac{1}{2}}
\def\pprime{{\prime\prime}}
\newcommand{\xra}{\xrightarrow}
\newcommand{\norm}[1]{\lVert#1\rVert}
\def\sxymat{\xymatrix@C=1.5ex@R=0.8ex}
\def\grp{$\xymatrix{ R\times_{X}R  \ar[r]^-{\mu} & R \ar@<1ex>[r]^-{s}\ar@<-1ex>[r]_-{t} & X}$}
\def\dar{\ar@<-0.5ex>[r]\ar@<0.5ex>[r]}
\def\tar{\ar[r]\ar@<1ex>[r]\ar@<-1ex>[r]}
\newcommand{\dmap}[2]{\ar@<-0.5ex>[r]_-{#2}\ar@<0.5ex>[r]^-{#1}}
\newcommand{\dotarrow}[2]{\xymatrix{{#1}\ar@{..>}[r]&{#2}}}
\def\cart{\ar@{}[dr]|{\square}}
\def\cA{\mathcal{A}}
\def\cE{\mathcal{E}}
\def\cF{\mathcal{F}}
\def\cG{\mathcal{G}}
\def\cM{\mathcal{M}}
\def\cO{\mathcal{O}}
\def\cV{\mathcal{V}}
\def\cW{\mathcal{W}}
\def\bC{{\mathbb C}}
\def\bH{{\mathbb H}}
\def\bN{{\mathbb N}}
\def\bR{{\mathbb R}}
\def\bZ{{\mathbb Z}}
\begin{document}

\SelectTips{cm}{}
\dedicatory{To Nigel Hitchin}
\date{July 16, 2018}
\title[Chains and Higgs bundles]{Irreducibility of moduli of semistable Chains and applications to $\U(p,q)$-Higgs bundles}
\author[S. Bradlow]{Steven Bradlow}
\address{Department of Mathematics
	University of Illinois at Urbana-Champaign
	Office: 322 Illini Hall 
	Mailing Address: 1409 W. Green Street
	Urbana, IL 61801}
\email{bradlow@illinois.edu}
\author[O. Garc\'{\i}a-Prada]{Oscar Garc\'ia-Prada}
\address{Instituto de Ciencias Matem\'aticas CSIC-UAM-UC3M-UCM,
	calle Nicol\'as Cabrera, 15, Campus de Cantoblanco, 28049 Madrid, Spain}
\email{oscar.garcia-prada@icmat.es}
\author[P. Gothen]{Peter Gothen}
\address{Centro de Matem\'atica da Universidade do Porto,
	Faculdade de Ci\^encias da Universidade do Porto,
	Rua do Campo Alegre,
	4169-007 Porto, Portugal}
\email{pbgothen@fc.up.pt}
\author[J. Heinloth]{Jochen Heinloth}
\address{Universit\"at Duisburg--Essen, Fachbereich Mathematik, Universit\"atsstrasse 2, 45117 Essen, Germany}
\email{Jochen.Heinloth@uni-due.de}

\thanks{
  The authors acknow\-ledge support from U.S. National Science
  Foundation grants DMS 1107452, 1107263, 1107367 "RNMS: GEometric
  structures And Representation varieties" (the GEAR Network).
  Second author partially supported by the Spanish MINECO under the 
ICMAT Severo Ochoa grant No. SEV-2011-0087, and grant No. MTM2013-43963-P 
and by the European Commission Marie Curie IRSES  MODULI Programme 
PIRSES-GA-2013-612534.
  Third author partially supported by CMUP (UID/MAT/00144/2013) and the project
  PTDC/MAT-GEO/2823/2014 funded by FCT (Portugal)
  with national funds.
  Fourth author partially supported through SFB/TR 45 of the DFG
} 

\begin{abstract}
We give necessary and sufficient conditions for moduli spaces of semistable chains on a curve to be irreducible and non-empty. This gives information on the irreducible components of the nilpotent cone of $\GL_n$-Higgs bundles and the irreducible components of moduli of systems of Hodge bundles on curves. As we do not impose coprimality restrictions, we can apply this to prove connectedness for moduli spaces of $\U(p,q)$-Higgs bundles.
\end{abstract}

\maketitle

\section{Introduction}

The aim of this article is to show that moduli spaces of systems of Hodge
  bundles on curves and character varieties for
the unitary groups $\U(p,q)$ are connected, once the numerical
invariants of the corresponding bundles are fixed
(Theorems~\ref{thm:non-empty-irreducible-alpha-higgs} and
\ref{thm:upq-connected}). For the character varieties it has been
known for a long time
\cite{bradlow-garcia-prada-gothen:2004,bradlow-garcia-prada-gothen:2003}
that the closure of the space of representations corresponding to
stable Higgs bundles is connected, but (except for some special cases
\cite{xia:2003,markman-xia:2002}) the possibility of components without stable points remained a somewhat nagging possibility.

We approach the problem using the fact that systems of Hodge bundles can be
viewed as holomorphic chains. As a consequence we also show that the
necessary conditions found in \cite{GPH} for the existence of
semistable chains on a curve are also sufficient conditions. This also
gives a concrete, but complicated, enumeration of the irreducible
components of the so called global nilpotent cone on a curve in case rank and degree are coprime, and an estimate in the general case (Corollary \ref{cor:irredcomp}). 

The results follow from a more detailed study of wall crossing phenomena for moduli spaces of semistable chains for varying stability conditions that extend arguments from \cite{H}.
The key problem for our applications is to extend these arguments to the boundary case of walls containing the stability condition induced from sta\-bi\-li\-ty of Higgs bundles.
There the available results turn out to be just strong enough to show that an irreducible stack for a slightly larger stability parameter contains representative of every S-equivalence class.

For the application to $\U(p,q)$-Higgs bundles it  is sufficient to prove the above results for moduli spaces of triples only, for which the wall crossing is easier to understand. We have therefore tried to illustrate our arguments in this special case throughout, before giving the general argument. 

The structure of the article is as follows. In Section \ref{sec:setup}
we set up the notation and recall the standard notions of stability
for holomorphic chains. In
Section \ref{sec:sufficient} we prove that the necessary conditions
for the existence of semistable chains found in \cite{GPH} are also
sufficient if the stability parameter $\alpha$ is larger than the value $\alpha_{\Higgs}$
coming from Higgs bundles (Theorem \ref{thm:chains-non-empty}).  In
Section \ref{sec:irreducibility-chains} we extend this to the boundary
case $\alpha=\alpha_{\Higgs}$ (Theorem
\ref{thm:non-empty-irreducible-alpha-higgs}). In Section
\ref{sec:application_U_pq} we apply these results to prove
connectedness of the $\U(p,q)$-character varieties. The application to
the components of the global nilpotent cone is then explained in the
last section.

After this work was completed Tristan Bozec managed to give a complete combinatorial description of the irreducible components of the global nilpotent cone in \cite{arXiv:1712.07362} using closely related methods. 

Many of the ideas in this article have their origin in Nigel Hitchin's
work. Indeed, in his seminal paper \cite{hitchin:1987} he introduced
Higgs bundles and their use in the study of character varieties for
surface groups --- in particular for representations in
$\SL_2\bR = \mathsf{SU}(1,1)$ --- and he used length two chains of
line bundles to study
the topology of the moduli spaces. We would like to take this
opportunity to acknowledge his influence on us and our work, and we
take great pleasure in dedicating this article to him.


\section{Recollection on moduli spaces of chains}\label{sec:setup}

By \cite[Section 4]{simpson:1992} the moduli spaces of Higgs bundles carry a $\mathbb{C}^*$-action for which the
fixed points parametrize systems of Hodge bundles.  In the case of $\U(p,q)$-Higgs bundles, these fixed points have an elementary description as moduli spaces of holomorphic chains that were studied in detail in \cite{AGPS}. 

In this section we therefore briefly recall the basic notions on chains of vector bundles and stability for these objects as well as the foundational results on their moduli that we will need. 

\subsection{Definitions and Notation}
Throughout we will fix a smooth projective curve $C$ of genus $g$ over a field $k$ of
characteristic $0$ and assume that $g\geq 1$. We denote the sheaf of differentials on $C$ by $\Omega$.
\begin{remark}
The restriction on the base field enters only through the use of \cite[Proposition 4.5]{AGPS}), for which
no characteristic free proof has been found so far. The assumption on the genus of the curve is needed to ensure that $2g-2\geq 0$, so that \cite[Proposition 4]{GPH} applies).
\end{remark}

\begin{definition}
 A {\em chain} $\cE_\bullet$ of length $r$ on $C$ is a collection
\begin{displaymath}
  \cE_r\xra{\phi_r}\cE_{r-1}\xra{\phi_{r-1}}\dots\xra{\phi_1}\cE_0
\end{displaymath}
consisting of vector bundles $\cE_j$ and morphisms of $\cO_C$-modules $\phi_j\colon \cE_j \to \cE_{j-1}$.
\end{definition}

\begin{definition}
For $\alpha=(\alpha_1,\dots,\alpha_r)\in\bR^{r}$ we will denote the $\alpha$-slope by

$$ \mu_\alpha(\cE_\bullet) := \frac{\sum_{i=0}^r \deg(\mathcal{E})_i +\sum_{i=1}^r \alpha_i \rank(\mathcal{E})_i}{\sum_{i=0}^r  \rank(\mathcal{E})_i}.$$

Where convenient we denote by $\un{n} =\un{\rk}(\cE_\bullet) \in
\bN_0^{r+1}, \un{d} =\un{\deg}(\cE_\bullet) \in \bZ^{r+1}$ the rank
and degree of the chain, and abbreviate $|n|:=\sum_{i=0}^r n_i$. We
will always assume $|n|>0$ (i.e.\ we allow some $n_i=0$, but not all
of them).  To emphasize that the $\alpha$-slope depends only on the
numerical invariants we will also use the notation

$$ \mu_\alpha(\un{n},\un{d}) := \frac{\sum_{i=0}^r d_i +\sum_{i=1}^r \alpha_i n_i}{|n|}.$$
\end{definition}

\begin{remark}In some formulae it is convenient to expand $\alpha$ to include $\alpha_0$, i.e.\ to take $\alpha=(\alpha_0, \alpha_1,\dots,\alpha_r)$. Notice that if $\alpha+\un{c}= (\alpha_0+c, \alpha_1+c,\dots,\alpha_r+c)$ then $\mu_{\alpha+\un{c}}(\cE_\bullet)=\mu_{\alpha}(\cE_\bullet)+c$. Since all important formulae involving $\alpha$-slopes (e.g. \eqref{slopeineq}) involve differences of slopes, we can without loss of generality assume that $\alpha_0=0$.
\end{remark}

\begin{definition} We write $\alpha_{\Higgs}=(i(2g-2))_{i=1,\dots r}$ and will abbreviate by 

$$\alpha>\alpha_{\Higgs}$$ 

\noindent the condition $\alpha_i-\alpha_{i-1} > 2g-2$ for  $i=1,\dots r$ (where we assume $\alpha_0:=0$).

\end{definition}

\begin{definition}

A chain $\cE_\bullet$  is called $\alpha$-semistable, if for all subchains $0\ne \mathcal{F}_\bullet\subset\cE_\bullet$  we have

\begin{equation}\label{slopeineq}
\mu_\alpha(\mathcal{F}_\bullet)\le\mu_\alpha(\cE_\bullet) 
\end{equation}

The chain is called $\alpha$-stable if the inequality is strict for all subchains
\end{definition}

\begin{definition}The stack of chains of rank $\un{n}$ and degree $\un{d}$ is denoted by $\Chain_{\un{n}}^{\un{d}}$. The open substack of $\un{\alpha}$-semistable chains is denoted by $\Chain_{\un{n}}^{\un{d},\alpha-\sst}$.

\end{definition}

\begin{remark}
	Chains of length $1$ are usually called triples $\cE_1 \map{\phi}\cE_0$. For these objects the stability condition depends on a single real parameter $\alpha$. The corresponding $\alpha$-slope is 
	$$  \mu_\alpha(\un{n},\un{d}) := \frac{d_0 + d_1 + \alpha  n_1}{n_0+n_1}.$$
\end{remark}

\begin{remark}\label{rem:Higgs}
	The relation to Higgs bundles is as follows: Any chain $\cE_\bullet$ defines a Higgs bundle $\cE:= \oplus_i (\cE_i \tensor \Omega^{-i})$ with Higgs field $\theta:= \oplus \phi_i \colon \cE \to \cE\tensor \Omega$. Conversely, any Higgs bundle $(\cE,\theta)$ that is a fixed point for the $\bC^*$-action $\theta \mapsto t\theta$ is of this form.
\end{remark}

\subsection{Necessary conditions for the existence of semistable chains}

Our starting point are the conditions for the existence of semistable chains introduced in  \cite{GPH}, which we will need to recall.

\begin{definition}
For a chain $\cE_\bullet =(\cE_r \to \dots \to \cE_0)$ we will call the following chains {\em standard subchains} of $\cE_\bullet$: 
\begin{enumerate}
	\item For $0\leq k <r$ the chain $\cE^{\prime,\geq k}_\bullet:=(0 \to \dots \to 0 \to \cE_k \to \dots \to \cE_0)$.
	\item For all $0\leq k < j \leq r$ such that $n_j <\min\{n_{k},\dots,n_{j-1}\}$ the chain
	$$\cE^{\prime,[k,j]}_\bullet:= (\cE_r \to \dots \to \cE_{j+1} \to \cE_j = \dots = \cE_j \to \cE_{k-1} \to \dots \to \cE_0).$$
\end{enumerate}
 
Dually we will call {\em standard quotients} of $\cE_\bullet$ the chains
\begin{enumerate}
	\item[(3)] For $0< k \leq r$ the chain $\cE^{\pprime,\leq k}_\bullet:=(\cE_r \to \dots \to \cE_k \to 0 \to \dots \to 0)$.
	\item[(4)] For all $0\leq k < j \leq r$ such that $n_k < \min\{n_{k+1},\dots,n_{j}\}$ the chain
	$$\cE^{\pprime,[k,j]}_\bullet:= (\cE_r \to \dots \to \cE_{j+1} \to \cE_k = \dots = \cE_k \to \cE_{k-1} \to \dots \to \cE_0).$$
\end{enumerate}

	Note that these types of chains are dual to each other, i.e.\ a standard subchain of $\cE_\bullet$ defines a standard quotient of the dual chain $\cE_\bullet^\vee$. 
	
\end{definition}

\begin{remark}\label{rem:standard}
As explained in \cite{GPH} this is a slight abuse of notation, because although the chains in (2) come equipped with a morphism to $\cE_\bullet$ this morphism need not be injective. However, it is shown in \cite{GPH} that for stability parameters $\alpha$ satisfying $\alpha_r >\dots > \alpha_0$ for any chain either the canonical map is injective, or there exists a subchain of slope larger than the slope of the standard subchain (constructed from the image of the canonical map). Thus semistable chains of degree $(\un{n},\un{d})$ can exist only if the slopes of the standard subchains satisfy inequality \eqref{slopeineq}, i.e.\ are bounded above by $ \mu_\alpha(\un{n},\un{d})$.  And moreover if equality occurs, the corresponding standard subchain defines an honest subchain of every semistable chain.
\end{remark}

\begin{remark} 
In the above we corrected a typo in \cite[Proposition 4]{GPH}: the given condition on the ranks $n_k$ has to be the one dual to the standard subchains, as the proof is given by duality. Fortunately this does not affect the rest of the arguments in loc.\ cit.\ as the proofs again apply duality.
\end{remark}

With this notation \cite[Proposition 4]{GPH} says that $\alpha$-semistable chains can only exist if none of the standard subchains (resp.\ quotient chains) has $\alpha-$slope $\geq \mu_{\alpha(\un{n},\un{d})}$ (resp. $\leq \mu_{\alpha(\un{n},\un{d})}$). Let us list the explicit form of these necessary conditions :

\begin{itemize}
\item For all $i$ such that $n_i=n_{i-1} $ we have
\begin{equation}\label{cond0}\tag{C0}
 d_i\leq d_{i-1}.
\end{equation}
\item  For all $0\leq k<r$ we have 
\begin{equation}\label{cond1}\tag{C1}
\frac{\sum_{i=0}^k \big(d_i + \alpha_i n_i\big)}{\sum_{i=0}^k n_i} \leq \mu_\alpha(\un{n},\un{d}).
\end{equation}
\item For all $0\leq k < j \leq r$ such that $n_j <\min\{n_{k},\dots,n_{j-1}\}$ we have 
\begin{equation}\label{cond2}\tag{C2}
\frac{\sum_{i\not\in[k,j]}\big(d_i+\alpha_i n_i \big)+(j-k+1)d_j+\big(\sum_{i=k}^j\alpha_i\big)n_j}{\sum_{i\not\in[k,j]}n_i+(j-k+1)n_j}\leq \mu_\alpha(\un{n},\un{d})
\end{equation}
\item For all $0\leq k < j \leq r$ such that $n_k < \min\{n_{k+1},\dots,n_{j}\}$ we have 
\begin{equation}\label{cond3}\tag{C3}
\frac{\sum_{i=k+1}^{j} \big( d_i-d_k +\alpha_i(n_i-n_k) \big)}{\sum_{i=k+1}^j(n_i-n_k)} \leq \mu_\alpha(\un{n},\un{d})
\end{equation}
\end{itemize}

\begin{remark} Note that since $\mu_\alpha$ is linear in $\alpha$, for fixed $\un{n},\un{d}$ the conditions \eqref{cond1}-\eqref{cond3} define half-spaces in the space of stability parameters $\bR^{r+1}$.
\end{remark}

\begin{remark}
We could have subsumed condition \eqref{cond0} in condition \eqref{cond2} by allowing $n_j \leq \min\{n_{k},\dots,n_{j-1}\}$, but we want to stress that \eqref{cond0} is independent of $\alpha$.
\end{remark}
\begin{remark}
 Condition~\eqref{cond3} can equivalently be stated for the
  $\alpha$-slope of the standard quotient $\cE^{'',[k,j]}$ rather than
  for the corresponding subchain. It becomes the following: For all
  $0\leq k < j \leq r$ such that
  $n_k < \min\{n_{k+1},\dots,n_{j}\}$ we have
\begin{equation}\label{cond3-prime}\tag{C3'}
  \frac{\sum_{i\not\in[k,j]}(d_i+\alpha_in_i)+(j-k+1)d_k+\sum_{i=k}^j\alpha_in_k}
  {\sum_{i\not\in[k,j]}n_i+(j-k+1)n_k}
  \geq \mu_\alpha(\un{n},\un{d}).
\end{equation}
\end{remark}

\begin{remark}\label{triplesCs}
	For triples the above conditions reproduce the conditions:
	$$ \alpha_{\min}= \frac{d_0}{n_0} - \frac{d_1}{n_1} \leq
        \alpha \leq \alpha_{\max}
        =\big(1+\frac{n_0+n_1}{|n_0-n_1|}\big)	 \big(\frac{d_0}{n_0}
        - \frac{d_1}{n_1}\big),$$
with $\alpha_{\max} = \infty$ if $n_0=n_1$.
\end{remark}

\section{Sufficiency of the conditions for $\alpha>\alpha_{\Higgs}$}\label{sec:sufficient}

\begin{definition}\hfill
\begin{enumerate}
\item For given $\un{n},\un{d}$ satisfying condition \eqref{cond0} we define the convex stability region: 
\[\mathrm{Stability}_{\un{n}}^{\un{d}} := \{ \alpha \in \bR^{r} | \alpha>\alpha_{\Higgs} \text{ and } \alpha \text{ satisfies conditions \eqref{cond1},\eqref{cond2},\eqref{cond3}} \}.\]

\item The parameter $\alpha\in \mathrm{Stability}_{\un{n}}^{\un{d}} $ is called a {\it critical value} if there exists $\un{n'},\un{d'}$ with $\un{n'}<\un{n}$ such that $\mu_{\alpha}(\un{n'},\un{d'})=\mu_{\alpha}(\un{n},\un{d})$ but $\mu_{\beta}(\un{n'},\un{d'})\ne\mu_{\alpha}(\un{n},\un{d})$ for some $\beta\in \bR^{r}$. If this happens, the condition $\mu_{\alpha}(\un{n'},\un{d'})=\mu_{\alpha}(\un{n},\un{d})$ cuts out a hyperplane called a {\it wall} in the space of stability parameters.

\end{enumerate}
\end{definition}

We can now state our first result:

\begin{theorem}\label{thm:chains-non-empty}
	Assume that $g\geq 1$ and let $\un{n}\in \bN^{r+1},\un{d}\in \bZ^{r+1}$ and $\alpha\in \bR^{r}_{>\alpha_{\Higgs}}$.
	Then the stack $\Chain_{\un{n}}^{\un{d},\alpha-\sst}$ is irreducible and non-empty if and only if $\un{n},\un{d}$ satisfy condition \eqref{cond0} and $\alpha\in \mathrm{Stability}_{\un{n}}^{\un{d}}$.
\end{theorem}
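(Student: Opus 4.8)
The plan is to prove the two directions of the equivalence separately. The necessity direction is essentially contained in the results of \cite{GPH} recalled above: if $\Chain_{\un{n}}^{\un{d},\alpha-\sst}$ is non-empty, then \eqref{cond0} together with conditions \eqref{cond1}--\eqref{cond3} must hold, since these are exactly the inequalities asserting that no standard subchain or quotient destabilizes, and by Remark~\ref{rem:standard} the relevant canonical maps give genuine destabilizing subobjects when the inequality fails. Combined with the hypothesis $\alpha>\alpha_{\Higgs}$ this says precisely $\alpha\in\mathrm{Stability}_{\un{n}}^{\un{d}}$. So the substance of the theorem is the converse: assuming $\alpha\in\mathrm{Stability}_{\un{n}}^{\un{d}}$ (in particular $\alpha>\alpha_{\Higgs}$) and \eqref{cond0}, I must produce an irreducible non-empty moduli stack.

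For the sufficiency direction I would argue by induction on the total rank $|n|$, controlling the argument by a simultaneous induction on the length $r$ and by a wall-crossing analysis within the region $\mathrm{Stability}_{\un{n}}^{\un{d}}$. The base cases are chains for which all the $n_i$ are concentrated (length-zero chains, i.e.\ semistable bundles on $C$, whose moduli stack is irreducible and non-empty by classical results), and the triple case $r=1$, where Remark~\ref{triplesCs} gives the explicit interval $[\alpha_{\min},\alpha_{\max}]$ and the geometry is well understood. The inductive step would exploit the convexity of $\mathrm{Stability}_{\un{n}}^{\un{d}}$: I first move $\alpha$ to a generic (non-critical) interior point, show the stack is irreducible and non-empty there, and then cross any intervening wall back towards the given $\alpha$ while keeping track of the change in the stack. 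At a large generic parameter, a semistable chain admits a canonical filtration by slope, and one can build chains explicitly by extensions of lower-rank chains whose moduli are irreducible non-empty by induction; the key is that for $\alpha$ large the destabilizing loci have positive codimension, so irreducibility is preserved.

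The main obstacle, and the technical heart of the argument, is the wall-crossing step itself: as $\alpha$ varies across a wall inside $\mathrm{Stability}_{\un{n}}^{\un{d}}$, the stack $\Chain_{\un{n}}^{\un{d},\alpha-\sst}$ changes by removing and adding Hecke-type or extension loci parametrizing chains that are semistable on one side of the wall but not the other. To preserve irreducibility across the wall I would estimate the codimensions of these loci, following and extending the method of \cite{H}: the pieces that get modified are flip loci built from pairs $(\un{n'},\un{d'}),(\un{n}-\un{n'},\un{d}-\un{d'})$ with matching slope on the wall, each fibered over products of lower-rank chain stacks, and I need that these have codimension at least one in the ambient stack so that irreducibility of the generic stratum propagates. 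The delicate point flagged in the introduction is that $\alpha_{\Higgs}$ lies on the boundary of the allowed region, so the codimension estimates must remain valid all the way down to $\alpha>\alpha_{\Higgs}$; verifying that the bounds do not degenerate as $\alpha$ approaches the Higgs value is where the condition $\alpha_i-\alpha_{i-1}>2g-2$ enters crucially, via the Riemann--Roch computation of the dimension of extension groups $\Ext^1$ between chains, whose first-order correction involves exactly the factor $2g-2$.
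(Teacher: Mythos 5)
Your treatment of necessity and the outer shell of your induction (on $r$ and $|n|$, with wall-crossing in the spirit of \cite{H}) agree with the paper, but the anchor of your induction has a genuine gap. You propose to establish non-emptiness and irreducibility at a ``large generic parameter'' by building chains as extensions of lower-rank chains, and then to cross walls back to the given $\alpha$. When the rank vector $\un{n}$ is not constant, no such parameter exists: conditions \eqref{cond2} and \eqref{cond3} bound the region $\mathrm{Stability}_{\un{n}}^{\un{d}}$ from above in exactly those directions (already for triples, Remark \ref{triplesCs} gives $\alpha_{\max}<\infty$ whenever $n_0\neq n_1$), and outside the region the semistable locus is \emph{empty} by the necessity part. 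So the place where your existence argument is supposed to start is not available; this is not a technicality, since producing semistable chains is the whole content of the theorem. A large-parameter construction of the kind you sketch (generic injective maps between semistable bundles) is legitimate only in the constant-rank case, where only \eqref{cond1} appears and the region is unbounded --- and that is precisely how the paper handles that case.

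For non-constant rank the paper runs the reduction in the opposite direction: one moves $\alpha$ along a path inside the convex region to a point $\alpha'$ on a \emph{boundary} wall, where equality holds in one of \eqref{cond1}--\eqref{cond3}; by \cite[Lemma 3.4]{H} irreducibility and non-emptiness at $\alpha'$ imply the same at $\alpha$. At such a wall, Remark \ref{rem:standard} forces every semistable chain to contain the corresponding standard subchain as an honest subchain of equal $\alpha$-slope, so passing to the associated graded defines a morphism
$$\pi\colon \Chain_{\un{n}}^{\un{d},\alpha-\sst} \to \Chain_{\un{n}'}^{\un{d}',\alpha-\sst}\times \Chain_{\un{n}-\un{n}'}^{\un{d}-\un{d}',\alpha-\sst},$$
and the vanishing $\Ext^2(\cE_\bullet'',\cE_\bullet')=0$ for semistable chains of equal slope (\cite[Lemmas 4.5 and 4.6]{GPHS} --- this is the only place where $\alpha>\alpha_{\Higgs}$ is used, not through codimension bounds degenerating near $\alpha_{\Higgs}$) makes $\pi$ a generalized vector bundle. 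Hence irreducibility \emph{and} non-emptiness descend simultaneously from the two factors, which have strictly smaller $|n|$. Closing this induction requires a combinatorial ingredient absent from your proposal, namely Lemma \ref{combinatorialLemma}: the invariants $(\un{n}',\un{d}')$ and $(\un{n}'',\un{d}'')$ of the standard subchain and quotient again satisfy \eqref{cond0} and lie in their own stability regions, so the inductive hypothesis actually applies to both factors. Finally, the delicate boundary behaviour you flag (stability parameters approaching $\alpha_{\Higgs}$) is the subject of Theorem \ref{thm:non-empty-irreducible-alpha-higgs}, not of this statement.
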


\begin{proof}			
Since we know that the conditions are necessary we only have to show that the conditions are also sufficient. We will argue by induction on $r$ and $|n|=\sum_{i=0}^r n_i$. 

For $r=0$ the conditions are empty and the stack of semi stable bundles of rank $n_0$ and degree $d_0$ is known to be irreducible and nonempty for $g>0$.

For $r>0$ we proceed as in the proof of \cite[Proposition 3.8]{H}, i.e., the basic strategy will be to vary the stability parameter $\alpha$ in order to reduce to the case that $\alpha$ lies on a boundary of $\Stab_{\un{n}}^{\un{d}}$. In this case any semistable chain $\cE_\bullet$ will contain a standard subchain of the same slope so it is properly semistable and this will allow us to conclude by induction, because of the following lemma:

\begin{lemma}\label{combinatorialLemma}
	Let $\un{n} \in \bN^{r+1},\un{d}\in \bZ^{r+1}$ be such that they satisfy condition \eqref{cond0} and suppose that $\alpha \in \Stab_{\un{n}}^{\un{d}}$ lies on a wall defined by equality in one of the conditions  \eqref{cond1},\eqref{cond2},\eqref{cond3}. Let $\un{n}^\prime,\un{d}^\prime$ (resp.  $\un{n}^\pprime,\un{d}^\pprime$) denote the numerical invariants of the standard subchain (resp. the standard quotient chain) defining the wall and set  $(\un{n}^\pprime,\un{d}^\pprime) :=(\un{n},\un{d})-(\un{n}^\prime,\un{d}^\prime)$ (resp.$(\un{n}^\prime,\un{d}^\prime) :=(\un{n},\un{d})-(\un{n}^\pprime,\un{d}^\pprime)$). 
	
	Then $(\un{n}^\prime,\un{d}^\prime)$ and $(\un{n}^\pprime,\un{d}^\pprime)$ satisfy \eqref{cond0} and moreover:
	$$\alpha \in \Stab_{\un{n}^\prime}^{\un{d}^\prime} \text{ and } \alpha \in \Stab_{\un{n}^\pprime}^{\un{d}^\pprime}.$$
\end{lemma}

\begin{remark}
	In case of triples, i.e., for $r=1$ this lemma is easy to see, because in this case the only standard subchains are $0\to \cE_0$ and $\cE_1 \to \cE_1$ (if $n_1<n_0$) and the standard quotient is $\cE_0 \to \cE_0$ (if $n_0<n_1$). In all of these cases $\Stab_{\un{n}^\prime}^{\un{d}^\prime}$ and  $\Stab_{\un{n}^\pprime}^{\un{d}^\pprime}$ are only defined by condition \eqref{cond0}, which is then trivially verified.  
\end{remark}

\begin{proof}[Proof of Lemma \ref{combinatorialLemma}:] This is an elementary verification:
Condition \eqref{cond0} is easily seen for the standard subchains and quotient chains, because the ranks and degrees of the bundles are a subset of those of the original chain and also for a  quotient by a standard subchain the ranks $n_i^\pprime$ are either $0$ or, we have  $$(n^\pprime_j,d^\pprime_j),(n_{j+1}^\pprime,d_{j+1}^\pprime)= ((n_j-n_i,d^\pprime_j-d_i),(n_{j+1}^\pprime-n_i,d_{j+1}^\pprime-d_i)).$$

Let us abbreviate for $0\leq k\leq l\leq r$
$$ \un{n}_{[k,l]}:=(0,\dots,0,n_k,\dots,n_l,0,\dots 0).$$

Suppose that $\alpha$ lies on a wall defined by \eqref{cond1}, i.e.
$$ \mu_{\alpha}(\un{n}_{[0,k]},\un{d}_{[0,k]})=\mu_{\alpha}(\un{n},\un{d}).$$
for some $k$. 

Thus we also have $\mu_{\alpha}(\un{n}_{[k+1,r]},\un{d}_{[k+1,r]})= \mu_{\alpha}(\un{n},\un{d})$.

Then \eqref{cond1} for $(\un{n}^\prime,\un{d}^\prime)=(\un{n}_{[0,k]},\un{d}_{[0,k]})$ is implied by \eqref{cond1} for $\un{n},\un{d}$ and the above equality. Similarly if the condition failed for $\un{n}^\pprime,\un{d}^\pprime$ for some $k^\pprime$, i.e.,
$$\mu_{\alpha}(\un{n}_{[k+1,k^\pprime,]},\un{d}_{[k+1,k^{\pprime}]}) > \mu_{\alpha}(\un{n},\un{d})$$ then since the slope of a sum is a convex combination of the slope of the summands we also have that
$$  \mu_{\alpha}(\un{n}_{[0,k^\pprime,]},\un{d}_{[0,k^{\pprime}]}) > \mu_{\alpha}(\un{n},\un{d}),$$
contradicting \eqref{cond1} for $\un{n},\un{d}$.

The same argument shows that \eqref{cond2} is preserved for $(\un{n}^\prime,\un{d}^\prime)=(\un{n}_{[0,k]},\un{d}_{[0,k]})$, because the union of a subchain of type 2 of invariants $(\un{n}_{[0,k]},\un{d}_{[0,k]})$ with  $(\un{n}_{[k+1,r]},\un{d}_{[k+1,r]})$ will define a subchain of type 2 of  $(\un{n},\un{d})$ and the same applies to the standard quotients of type (3). The same argument gives  the result for  $(n^\pprime,d^\pprime)=(\un{n}_{[k+1,r]},\un{d}_{[k+1,r]})$.

Next suppose that $\alpha$ lies on a wall defined by \eqref{cond2} for some $k<j$. Let us denote by   $(\un{n}^\prime,\un{d}^\prime)=: (\un{n}^{[j,k]},\un{d}^{[j,k]})$ the corresponding numerical invariants.  

A standard subchain of type 2 of a chain with invariants $(\un{n}^\prime,\un{d}^\prime)$ is also a subchain of the same type for $(\un{n},\un{d})$. Similarly, the union of a subchain of type 2 of $(\un{n}^\pprime,\un{d}^\pprime)$ with $(\un{n}^\prime,\un{d}^\prime)$ is also a standard subchain of type 2 of $(\un{n},\un{d})$, so \eqref{cond2} will again be satisfied. 

Given a standard subchain of type 1 of $(\un{n}^\prime,\un{d}^\prime)$ for some index $k^\prime$ is handled by the same argument if $k^\prime \not \in [j,k]$. Suppose that $k^\prime \in [j,k]$. By assumption we then know $\mu(\un{n}^{[j,k^\prime]},\un{d}^{[j,k^\prime]}) \leq \mu_{\alpha}(\un{n},\un{d})$. But this implies that 
$$\mu_{\alpha}(\un{n}^{[j,k^\prime]}- \un{n}^{[j,k]},\un{d}^{[j,k^\prime]} \un{d}^{[j,k]}) \geq \mu_{\alpha}(\un{n},\un{d})\ .$$ 
Since we know that  $\mu_{\alpha}(\un{n}_{[0,k^\prime]},\un{d}_{[0,k^\prime]}) \leq \mu(\un{n},\un{d})$ we must also have 
$$\mu_{\alpha}(\un{n}^\prime_{[0,k^\prime]},\un{d}^\prime_{[0,k^\prime]}) \leq \mu(\un{n},\un{d})\ .$$

For the quotient  $\un{n}^\pprime,\un{d}^\pprime$ the argument is easier, since the union of a subchain of type $1$ with $\un{n}^\prime,\un{d}^\prime$ would be another standard subchain of type 2 of $\un{n},\un{d}$.

The argument for standard quotients is of type 3 is analogous. 

Finally, the case of $\alpha$ lying on a wall defined by \eqref{cond3} is dual to the above.
\end{proof}

Let us now finish the proof of Theorem \ref{thm:chains-non-empty}.
  
{\em Case 1 (Chains of constant rank):} 
In case $\un{n}=(n,\dots,n)$ is constant only condition \eqref{cond1} appears. If $\alpha$ does not lie on the boundary of $\mathrm{Stability}_{\un{n}}^{\un{d}}$ we can apply \cite[Lemma 3.4 and Proposition 3.8 (1)]{H} to find that $\Chain_{\un{n}}^{\un{d},\alpha-\sst}$ is irreducible and that it is non-empty if and only if $\Chain_{\un{n}}^{\un{d},\alpha_\infty-\sst}$ is non-empty for any $\alpha_\infty$ in  $\mathrm{Stability}_{\un{n}}^{\un{d}}$ satisfying $\alpha_{\infty,i}-\alpha_{\infty,i-1} > d_{i-1}-d_i$.
 
In this case any $\alpha_\infty$-unstable chain $\cE_\bullet^\prime$ contains a maximal destabilizing subchain $\cE_\bullet^\prime$ which is again of constant rank by \cite[Proposition 6.9]{GPHS}. 
Since 
$$\mu_{\alpha_\infty}(\cE_\bullet^\prime) = \sum \frac{1}{r+1} (\mu(\cE_i^\prime) + \alpha_{\infty,i})$$
and the same holds for $\mu_{\alpha_\infty}(\cE_\bullet)$ we see that for at least one $i$ the subsheaf $\cE_i^\prime \subset \cE_i$ must be destabilizing.

The stack $\Chain_{\un{n}}^{\un{d},\text{inj}}$ of those chains for which all maps $\cE_i \to \cE_{i-1}$ are injective is smooth and irreducible and the forgetful map to any of the $\Bun_n^{d_i}$ is a smooth fibration. Thus there is an open dense subset of chains such that all of the $\cE_i$ are semistable. Therefore we have shown that in this case  $\Chain_{\un{n}}^{\un{d},\alpha_\infty-\sst}$ is not empty.

If $\alpha$ lies on a wall defined by condition \eqref{cond1}, say for the integer $k$, we can argue by induction on $r$: In this case any $\alpha$ semistable chain has the subchain $\cE_\bullet^\prime =(0 \to \dots \to \cE_k \to \dots \to \cE_0)$ with $\mu_{\alpha}(\cE_\bullet^\prime)= \mu_\alpha(\un{n},\un{d})$ and this subchain is again $\alpha$-semistable. 

Consider the canonical map
$$
\pi\colon \Chain_{\un{n}}^{\un{d},\alpha-\sst} \to
\Chain_{(0,\dots,0,n,\dots,n)}^{(0,\dots,0,d_k,\dots,d_0),\alpha-\sst} 
\times
\Chain_{(n,\dots,n,0,\dots,0)}^{(d_r,\dots,d_{k+1},0,\dots,0),\alpha-\sst}
$$

By induction we know that the factors of the product are non-empty and irreducible and moreover for any pair of chains $(\cE_\bullet^\prime,\cE_\bullet^\pprime)$ in this product we have that
$$\Ext^2(\cE_\bullet^\pprime,\cE_\bullet^\prime)=0$$ 
by \cite[Lemma 4.5]{GPHS}.
Thus the map $\pi$ is a generalized vector bundle and therefore $\Chain_{\un{n}}^{\un{d},\alpha-\sst}$ is irreducible and non-empty also in this case.
This proves the theorem for chains of constant rank.

{\em Case 2 (Chains of non-constant rank)}

We may now assume that the rank $\un{n}$ is not constant.  Again, if $\alpha$ lies in the interior of $\mathrm{Stability}_{\un{n}}^{\un{d}}$ we argue again as in \cite[Proposition 3.8]{H}:

In this case we can find a piecewise linear path $\gamma$ inside of the convex region such that $\gamma(0)=\alpha$ and $\alpha^\prime=\gamma(1)$ lies on a single wall defined by one of the inequalities. By \cite[Lemma 3.4]{H} this reduces the claim to proving that $\Chain_{\un{n}}^{\un{d},\alpha^\prime-\sst}$ is non empty and irreducible if $\alpha^\prime$ lies on a wall defined by one of the conditions \eqref{cond1},\eqref{cond2},\eqref{cond3}.

Thus we may assume that $\alpha=\alpha^\prime$ lies on at least one of the walls.
If it lies on several of the walls choose one of these.
For any $\alpha$-semistable chain $\cE_\bullet$ the corresponding standard subchain $\cE^\prime_\bullet$ (or standard quotient chain) will actually be a subchain of the same $\alpha$-slope. Taking the associated grade chain we get a morphism 
$$\pi\colon \Chain_{\un{n}}^{\un{d},\alpha-\sst} \to  \Chain_{\un{n}^\prime}^{\un{d}^\prime,\alpha-\sst} \times  \Chain_{\un{n}-\un{n}^\prime}^{\un{d}-\un{d}^\prime,\alpha-\sst}.$$
Note that since $\alpha>\alpha_{\Higgs}$ it follows (see \cite[Lemma 4.6]{GPHS}) that $\Ext^2(\cE_\bullet^\pprime, \cE_\bullet^\prime)=0$ for all $(\cE_\bullet^\prime,\cE_\bullet^\pprime) \in  \Chain_{\un{n}^\prime}^{\un{d}^\prime,\alpha-\sst} \times  \Chain_{\un{n}-\un{n}^\prime}^{\un{d}-\un{d}^\prime,\alpha-\sst}$ and hence $\pi$ is a generalized vector bundle. Therefore, again we can conclude by induction because from Lemma \ref{combinatorialLemma} we know that $\alpha^\prime$ also satisfies the inequalities for $(\un{n}^\prime,\un{d}^\prime)$ and $(\un{n}^\pprime,\un{d}^\pprime)$. 
\end{proof}

\section{Irreducibility of moduli spaces of semistable chains
  (non-coprime case)}
\label{sec:irreducibility-chains}

\begin{theorem}
  \label{thm:non-empty-irreducible-alpha-higgs}
	Assume $g>1$. Let $\un{n}\in \bN^{r+1},\un{d}\in \bZ^{r+1}$ and $\alpha= \alpha_{\Higgs}$.
	Then the coarse moduli space $M_{\un{n}}^{\un{d}}$ of $\Chain_{\un{n}}^{\un{d},\alpha_{\Higgs}-\sst}$ is irreducible and non-empty if and only if $\un{n},\un{d}$ satisfies condition \eqref{cond0} and $\alpha_{\Higgs}\in \overline{\mathrm{Stability}}_{\un{n}}^{\un{d}}$.
\end{theorem}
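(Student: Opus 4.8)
The plan is to transport the irreducibility established in Theorem~\ref{thm:chains-non-empty} across the wall $\{\alpha=\alpha_{\Higgs}\}$ by perturbing to a nearby parameter $\alpha'>\alpha_{\Higgs}$ and using the $\alpha_{\Higgs}$-associated-graded construction. Necessity is the easier direction: if $M_{\un{n}}^{\un{d}}$ is non-empty there is an $\alpha_{\Higgs}$-semistable chain, and since $\alpha_{\Higgs,r}>\dots>\alpha_{\Higgs,0}$ for $g>1$ the necessary conditions of \cite{GPH} apply and yield \eqref{cond0} together with the closed inequalities \eqref{cond1}--\eqref{cond3} at $\alpha_{\Higgs}$; as each standard subchain omits a top or a bottom segment of the chain, these are exactly the inequalities defining the closure $\overline{\Stab}_{\un{n}}^{\un{d}}$, so $\alpha_{\Higgs}\in\overline{\Stab}_{\un{n}}^{\un{d}}$. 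The real content is sufficiency.

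For sufficiency, since $\alpha_{\Higgs}$ lies in the closure of the convex region $\Stab_{\un{n}}^{\un{d}}$ I may fix $\alpha'\in\Stab_{\un{n}}^{\un{d}}$ with $\alpha'>\alpha_{\Higgs}$, chosen so close to $\alpha_{\Higgs}$ that (i) the only walls for $(\un{n},\un{d})$ meeting the segment $[\alpha',\alpha_{\Higgs}]$ pass through $\alpha_{\Higgs}$, and (ii) every $\alpha_{\Higgs}$-stable chain occurring as a Jordan--Holder factor of an $\alpha_{\Higgs}$-semistable chain of type $(\un{n},\un{d})$ is still $\alpha'$-stable (possible by boundedness and openness of stability). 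By Theorem~\ref{thm:chains-non-empty} the stack $\Chain_{\un{n}}^{\un{d},\alpha'-\sst}$ is irreducible and non-empty. I first claim that $\alpha'$-semistability implies $\alpha_{\Higgs}$-semistability: were an $\alpha'$-semistable $\cE_\bullet$ not $\alpha_{\Higgs}$-semistable, then by the analysis of \cite{GPH} for increasing parameters some standard subchain $\cF_\bullet$ would satisfy $\mu_{\alpha_{\Higgs}}(\cF_\bullet)>\mu_{\alpha_{\Higgs}}(\cE_\bullet)$ while $\mu_{\alpha'}(\cF_\bullet)\le\mu_{\alpha'}(\cE_\bullet)$, forcing the affine function $\alpha\mapsto\mu_\alpha(\cF_\bullet)-\mu_\alpha(\cE_\bullet)$ to vanish on a wall interior to $[\alpha',\alpha_{\Higgs}]$ and contradicting (i). Passing to the $\alpha_{\Higgs}$-associated graded therefore defines a morphism $\phi\colon\Chain_{\un{n}}^{\un{d},\alpha'-\sst}\to M_{\un{n}}^{\un{d}}$ whose image is irreducible.

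The crux is that $\phi$ is surjective: every S-equivalence class must have an $\alpha'$-semistable representative. Such a class is represented by an $\alpha_{\Higgs}$-polystable chain $\bigoplus_{i=1}^m\cG^{(i)}_\bullet$ with $\alpha_{\Higgs}$-stable summands of common slope $\mu_{\alpha_{\Higgs}}(\un{n},\un{d})$, and since any iterated extension of these summands has the same $\alpha_{\Higgs}$-associated graded it suffices to produce such an extension that is $\alpha'$-semistable. Ordering the summands by increasing $\alpha'$-slope and taking a filtration with these successive quotients makes every subobject of the filtration have $\alpha'$-slope at most $\mu_{\alpha'}(\un{n},\un{d})$, while $\alpha'\in\Stab_{\un{n}}^{\un{d}}$ bounds the $\alpha'$-slopes of the standard subchains of the total; a generic, genuinely non-split iterated extension then has no $\alpha'$-destabilizing subchain and is $\alpha'$-semistable. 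Two ingredients make this step work: the vanishing $\Ext^2(\cE_\bullet^\pprime,\cE_\bullet^\prime)=0$ for $\alpha'>\alpha_{\Higgs}$ from \cite[Lemma 4.6]{GPHS}, which renders the tower of extension spaces smooth of the expected dimension, and the hypothesis $g>1$, which via Riemann--Roch forces the relevant $\Ext^1$-groups to be non-zero so that non-split extensions exist at all. This surjectivity is the main obstacle, and it is precisely where the strict inequality $\alpha'>\alpha_{\Higgs}$ is needed: at $\alpha_{\Higgs}$ the $\Ext^2$-vanishing fails, the associated-graded morphism of Theorem~\ref{thm:chains-non-empty} ceases to be a generalized vector bundle, and the direct induction is no longer available.

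Granting surjectivity, $M_{\un{n}}^{\un{d}}=\phi(\Chain_{\un{n}}^{\un{d},\alpha'-\sst})$ is the image of an irreducible stack, hence irreducible, and non-empty because the source is. Following the paper's pattern I would first carry this out for triples ($r=1$), where a polystable representative is a direct sum of stable triples of equal slope and the iterated extension collapses to a single non-split extension of two stable triples, making the wall-crossing and the genericity entirely explicit, and only then run the general induction, using Lemma~\ref{combinatorialLemma} to keep the numerical invariants of the summands within their own stability regions throughout.
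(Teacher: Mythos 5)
Your overall plan (perturb to $\alpha'>\alpha_{\Higgs}$ and prove surjectivity of $\Chain_{\un{n}}^{\un{d},\alpha'-\sst}\to M_{\un{n}}^{\un{d}}$) is the paper's strategy for \emph{one} of its cases, but your proposal has a genuine gap: it cannot treat the case where $\alpha_{\Higgs}$ lies on a wall given by equality in one of \eqref{cond1}--\eqref{cond3}, and that case is part of the theorem. Concretely, take triples with $n_1<n_0$ and $2g-2=\alpha_{\max}$ (notation of Remark~\ref{triplesCs}), e.g.\ $\un{n}=(2,1)$, $\cL,\cQ$ line bundles with $\deg\cQ-\deg\cL=g-1$, $\un{d}=(\deg\cL+\deg\cQ,\deg\cL)$. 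Then $\Stab_{\un{n}}^{\un{d}}=\{\alpha>2g-2\}\cap\{\alpha\le\alpha_{\max}\}=\emptyset$, so there is no $\alpha'$ to perturb to, and indeed $\Chain_{\un{n}}^{\un{d},\alpha'-\sst}=\emptyset$ for every $\alpha'>\alpha_{\Higgs}$ by Theorem~\ref{thm:chains-non-empty}, while $M_{\un{n}}^{\un{d}}\neq\emptyset$. The failure is not merely in choosing $\alpha'$: the S-equivalence class of the polystable chain $(\cL\xra{\id}\cL)\oplus(0\to\cQ)$ has \emph{no} $\alpha'$-semistable representative for any $\alpha'>2g-2$, because $\cHom^\bullet\bigl((\cL\xra{\id}\cL),(0\to\cQ)\bigr)$ is acyclic (its differential $b$ is an isomorphism), so every extension with subchain $(0\to\cQ)$ splits; hence every representative of the class contains $(\cL\xra{\id}\cL)$ as a subchain, and that subchain strictly destabilizes as soon as $\alpha'>\alpha_{\max}$. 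This is exactly why the paper treats the standard-wall case by a different argument: there every $\alpha_{\Higgs}$-semistable chain contains the standard subchain of equal slope, so passing to associated gradeds gives a surjection onto $\Chain_{\un{n}'}^{\un{d}',\alpha_{\Higgs}-\sst}\times\Chain_{\un{n}-\un{n}'}^{\un{d}-\un{d}',\alpha_{\Higgs}-\sst}$ whose fibres are single S-equivalence classes, and one concludes by induction on $r$ and $|n|$.

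The same acyclicity phenomenon undercuts your surjectivity argument even in the remaining case ($\alpha_{\Higgs}$ on no standard wall), which is the one the perturbation strategy is designed for. Non-vanishing of $\Ext^1$ between Jordan--H\"older factors is \emph{not} a consequence of Riemann--Roch and $g>1$: as the example shows, these groups can vanish identically. The actual input is Theorem~\ref{chi} (\cite[Proposition 4.5]{AGPS}), giving $\chi(\cE_\bullet^\pprime,\cE_\bullet^\prime)\le 0$ for $\alpha_{\Higgs}$-semistable chains of equal slope, with equality exactly when $b$ is an isomorphism; this is a substantive theorem (and the source of the characteristic-zero hypothesis), not Riemann--Roch. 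Granting it, $\Ext^1=0$ forces the Hom complex to be acyclic, and one then needs \cite[Corollary 3.7]{H} (together with the paper's analysis of chains whose support is not an interval) to conclude that acyclicity forces the destabilizing summand to have the numerical invariants of a standard subchain --- a contradiction precisely because $\alpha_{\Higgs}$ was assumed not to lie on a standard wall. Your proposal has no mechanism to exclude the acyclic case, and that exclusion is where the case distinction enters. Moreover, your claim that a generic non-split iterated extension (ordered by $\alpha'$-slope) is $\alpha'$-semistable is unsubstantiated; the paper avoids this by inducting over the finite poset of $\alpha_+$-Harder--Narasimhan strata: a non-split extension in the direction opposite to the HN filtration is automatically $\alpha_{\Higgs}$-semistable, S-equivalent to the original chain, and lies in a strictly less unstable stratum, so finitely many iterations reach the $\alpha_+$-semistable locus. (Your appeal to \cite[Lemma 4.6]{GPHS} is also misplaced: it concerns $\alpha$-semistable chains for $\alpha>\alpha_{\Higgs}$, not the $\alpha_{\Higgs}$-semistable factors you are extending.)
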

Note that in contrast to the previous section we only prove the result for the coarse moduli space. 
 
\begin{proof}
As before we will prove the statement by induction on $r$ and $|n|$. For $r=0$ the claim follows from the irreducibility of the stack of semistable bundles.
	
If $\alpha_{\Higgs}$ lies on one of the walls defined by \eqref{cond1},\eqref{cond2},\eqref{cond3}	then by Remark \ref{rem:standard} any semistable chain will contain a standard subchain of the same slope, say of rank $\un{n}^\prime, \un{d}^\prime$. 
This defines a surjective morphism
$$p \colon \Chain_{\un{n}}^{\un{d},\alpha_{\Higgs}-\sst} \to \Chain_{\un{n}^\prime}^{\un{d}^\prime,\alpha_{\Higgs}-\sst}\times \Chain_{\un{n}-\un{n}^\prime}^{\un{d}-\un{d}^\prime,\alpha_{\Higgs}-\sst} $$
and all its fibers consist of bundles in the same S-equivalence class. Thus in this case we can conclude by induction.

If $\alpha_{\Higgs}$ does not lie on any wall then $\alpha_{\Higgs}-$semistability is equivalent to $\alpha$-stability for $\alpha$ in some neighborhood of $\alpha_{\Higgs}$, so that by Theorem \ref{thm:chains-non-empty} we already know that the stack of semistable chains is irreducible and therefore the same holds for its coarse moduli space.

Finally suppose that $\alpha_{\Higgs}$ lies on some other wall in the space of stability parameters.
Choose a short line segment $[\alpha_-,\alpha_+]$ through $\alpha$ such that $\alpha$ is the only critical value in the line and such that $\alpha_+\in \mathrm{Stability}_{\un{n}}^{\un{d}}$. Let us denote by $\Chain_{\un{n}}^{\un{d},\alpha_{+},HN-t}$ the Harder--Narasimhan stratum of type $t$ in the moduli stack of chains.

Then we know that
\begin{align*} 
\Chain_{\un{n}}^{\un{d},\alpha_{\Higgs}-\sst} &= \Chain_{\un{n}}^{\un{d},\alpha_{+}-\sst} \cup \bigcup_{t\in I} \Chain_{\un{n}}^{\un{d},\alpha_{+},HN-t} \\
\end{align*}
where $I$ is the set of types of Harder--Narasimhan-filtrations for $\alpha_+$ such that the $\alpha$-slopes of the graded quotients are all equal to $\mu_\alpha(\un{n},\un{d})$.

Also recall that Harder--Narasimhan-strata have a partial ordering defined by $$t^\prime < t \Leftrightarrow \overline{\Chain_{\un{n}}^{\un{d},\alpha_{+},HN-t}} \cap \Chain_{\un{n}}^{\un{d},\alpha_{+},HN-t^\prime} \neq \emptyset \text{ and } t\neq t^\prime.$$ 

In this situation it seems difficult to argue by dimension reasons as in \cite[Section 3]{H}, because the stacks may now be singular. However, we claim that any $\alpha$-semistable chain is S-equivalent to a chain in $\Chain_{\un{n}}^{\un{d},\alpha_{+}-\sst}$. This will imply that the morphism 

$$\Chain_{\un{n}}^{\un{d},\alpha_{+}-\sst} \to M_{\un{n}}^{\un{d}}$$ 

\noindent is surjective and since $\Chain_{\un{n}}^{\un{d},\alpha_{+}-\sst}$ is irreducible by Theorem \ref{thm:chains-non-empty} we can then conclude the same for $M_{\un{n}}^{\un{d}}$.

Suppose then that $\cE_\bullet \in \Chain_{\un{n}}^{\un{d},\alpha_{+},HN-t}\subset \Chain_{\un{n}}^{\un{d},\alpha_{\Higgs}-\sst}$. Let 
$$0=\cF^0\subset \cF_\bullet^1 \subset \dots  \subset \cF_\bullet^k = \cE_\bullet$$
\noindent be the $\alpha_+$-Harder--Narasimhan-filtration and $\cE_\bullet^i := \cF_\bullet^i/\cF_\bullet^{i-1}$ the subquotients. Then $\cE$ is S-equivalent to $\oplus \cE_\bullet^i$. Consider the stack of extensions in the opposite direction 
$$\un{\Ext}(\cE_\bullet^1,\dots,\cE_\bullet^k) = \langle (\cG^k_\bullet \subset \dots \subset \cG^1_\bullet, \psi_i\colon \cG_\bullet^i/\cG_\bullet^{i+1} \cong \cE_i ) \rangle.$$

\noindent Any such extension can be degenerated into the split extension. It is therefore $\alpha$-semistable and can moreover only lie in Harder--Narasimhan-strata of type $t^\prime$ with $t^\prime \geq t$. Finally if such an extension lies in the same Harder--Narasimhan-stratum as $\cE_\bullet$  then the extension must be a split extension. (This is by induction, e.g. the maximal destabilizing $\cF^\prime_\bullet \to \cG_\bullet^1$ satisfies that the composition $\cF^\prime_\bullet \to \cG_\bullet^1 \to \cG_\bullet^1/\cG_{\bullet}^{2}$ is an isomorphism, because the kernel has slope $\geq \mu_{\alpha_+}(\cF^\prime_\bullet)$ and $\cG_\bullet^{2}$ has a filtration by bundles of smaller slope, so the kernel must be $0$.)

We are therefore reduced to show that there exists a non-trivial extension, because this would imply that $\cE_\bullet$ is S-equivalent to a chain contained in a less unstable Harder--Narasimhan-stratum and by induction this implies that $\cE_\bullet$ is S-equivalent to a $\alpha_+$-semistable chain.

The existence of a non-trivial extension follows from \cite[Proposition 4.5]{AGPS} and \cite[Section 3]{H}. Let us recall this in the special case we need. Given chains $\cE_\bullet^\prime,\cE_\bullet^\pprime$  we will denote 
$$\cHom^\bullet(\cE_\bullet^\pprime,\cE_\bullet^\prime) := [\oplus \cHom(\cE_i^\pprime,\cE_i^\prime) \map{b} \oplus \cHom(\cE_i^\pprime,\cE_{i-1}^\prime)]$$
the complex of sheaves that computes the $\bR \cHom$ in the category of chains. The differential $b$ is the sum of the commutators defined by the chain maps.
We will abbreviate 
$$\chi(\cE_\bullet^\pprime,\cE_\bullet^\prime):= \chi (\bH^*(C,\cHom^\bullet(\cE_\bullet^\pprime,\cE_\bullet^\prime)))$$

\begin{theorem}[{\cite[Proposition 4.5]{AGPS}}]\label{chi}
  Let $\cE_\bullet^\prime,\cE_\bullet^\pprime$ be
  $\alpha_{\Higgs}$-semistable chains with
  $\mu_{\alpha_{\Higgs}}(\cE_\bullet^\prime)
  =\mu_{\alpha_{\Higgs}}(\cE_\bullet^\pprime)$.
  Then we have
  $$ \chi(\cE_\bullet^\pprime,\cE_\bullet^\prime) \leq 0$$ and $=$
  holds if and only if the differential $b$ of the complex is an
  isomorphism.
\end{theorem}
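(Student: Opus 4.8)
The plan is to interpret $\chi(\cE_\bullet^\pprime,\cE_\bullet^\prime)$ as the Euler characteristic of the two-term complex $\cHom^\bullet(\cE_\bullet^\pprime,\cE_\bullet^\prime)=[\oplus\cHom(\cE_i^\pprime,\cE_i^\prime)\map{b}\oplus\cHom(\cE_i^\pprime,\cE_{i-1}^\prime)]$ and to compute it term by term using Riemann--Roch. First I would recall that for vector bundles $\cF,\cG$ on a curve of genus $g$ one has $\chi(\cHom(\cF,\cG))=\chi(\cF^\vee\tensor\cG)=\rk(\cF)\deg(\cG)-\rk(\cG)\deg(\cF)+\rk(\cF)\rk(\cG)(1-g)$. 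Summing the contribution of the degree-$0$ term $\oplus_i\cHom(\cE_i^\pprime,\cE_i^\prime)$ with a minus sign for the degree-$1$ term $\oplus_i\cHom(\cE_i^\pprime,\cE_{i-1}^\prime)$, the $(1-g)$ rank-rank pieces and the degree pieces should combine, after using $\alpha_{\Higgs,i}=i(2g-2)$ and collecting terms, into an expression built from the $\alpha_{\Higgs}$-slopes. The expectation is that $\chi(\cE_\bullet^\pprime,\cE_\bullet^\prime)$ becomes proportional to $\mu_{\alpha_{\Higgs}}(\cE_\bullet^\prime)-\mu_{\alpha_{\Higgs}}(\cE_\bullet^\pprime)$ times a positive factor (the product of total ranks), with the shift by $2g-2$ in the degree-$1$ term being exactly what converts the naive $\chi$ into the $\alpha_{\Higgs}$-slope difference. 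Under the hypothesis $\mu_{\alpha_{\Higgs}}(\cE_\bullet^\prime)=\mu_{\alpha_{\Higgs}}(\cE_\bullet^\pprime)$ this slope-dependent part vanishes, and what remains is a correction term that is manifestly $\leq 0$.

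Concretely, I would aim to rewrite $\chi(\cE_\bullet^\pprime,\cE_\bullet^\prime)$, after the slope-difference cancels, as $-\dim\Hom^0(\cE_\bullet^\pprime,\cE_\bullet^\prime)-\dim\Hom^2(\cE_\bullet^\pprime,\cE_\bullet^\prime)+(\text{something})$, but the cleaner route is to invoke Theorem~\ref{chi}, which is precisely the cited \cite[Proposition 4.5]{AGPS}: this is the statement we are proving, so in fact the task is to verify the inequality from the Riemann--Roch computation together with a semistability argument controlling $\mathbb{H}^0$. The sign of $\chi$ is governed by $\dim\mathbb{H}^0-\dim\mathbb{H}^1+\dim\mathbb{H}^2$, where $\mathbb{H}^i$ denotes the hypercohomology of $\cHom^\bullet(\cE_\bullet^\pprime,\cE_\bullet^\prime)$. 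The group $\mathbb{H}^0=\Hom(\cE_\bullet^\pprime,\cE_\bullet^\prime)$ is the space of chain morphisms, and by $\alpha_{\Higgs}$-semistability of equal slope any nonzero such morphism would be forced to be (generically) an isomorphism in each degree, i.e. the differential $b$ controls when $\mathbb{H}^0$ is large.

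The key step, and the main obstacle, is to show that the equality $\chi=0$ forces $b$ to be an \emph{isomorphism} of sheaves, not merely that some cohomological quantity degenerates. The clean way to organize this is to observe that the mapping cone of $b$ has Euler characteristic $-\chi(\cE_\bullet^\pprime,\cE_\bullet^\prime)$, and that $b$ being an isomorphism is equivalent to this cone being acyclic. One direction is immediate: if $b$ is an isomorphism then the complex is quasi-isomorphic to zero and $\chi=0$. For the converse I would use the semistability hypothesis to show $\mathbb{H}^0=0$ unless $b$ is an isomorphism, via the standard argument that a nonzero chain map between $\alpha_{\Higgs}$-semistable chains of equal slope has image a subchain of maximal slope; combined with the self-dual shape of the complex (the degree-$1$ term is the $(2g-2)$-twist appearing in Serre duality for chains), $\mathbb{H}^2$ is dual to an analogous $\Hom$ and is likewise controlled. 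The delicate point is that semistability, not stability, is assumed, so one cannot conclude an honest isomorphism of chains from $\mathbb{H}^0\neq0$; instead the isomorphism must be extracted at the level of the sheaf-map $b$ itself, which is where the detailed rank-and-degree bookkeeping of the Riemann--Roch computation feeds back in to pin down that the only way the inequality is saturated is for $b$ to have trivial kernel and cokernel as a map of sheaves.
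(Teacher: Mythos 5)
There is a genuine gap, and it is in the central inequality. Your plan is to compute $\chi(\cE_\bullet^\pprime,\cE_\bullet^\prime)$ by Riemann--Roch and to claim that, once $\mu_{\alpha_{\Higgs}}(\cE_\bullet^\prime)=\mu_{\alpha_{\Higgs}}(\cE_\bullet^\pprime)$ is imposed, ``what remains is a correction term that is manifestly $\leq 0$.'' That is false. Riemann--Roch expresses $\chi$ in terms of all the individual ranks and degrees, not just the two $\alpha_{\Higgs}$-slopes, and the resulting expression has no fixed sign on the locus of slope equality. Concretely, take $r=1$ and $\cE_\bullet^\prime=\cE_\bullet^\pprime=(\cE_1\to\cE_0)$ with $\cE_0,\cE_1$ line bundles of degrees $d_0,d_1$; slope equality is automatic, and the computation you propose gives
$$\chi=\chi(\cE_1^\vee\tensor\cE_1)+\chi(\cE_0^\vee\tensor\cE_0)-\chi(\cE_1^\vee\tensor\cE_0)=(1-g)+(d_1-d_0),$$
which is positive as soon as $d_1-d_0>g-1$. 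Such a chain violates condition \eqref{cond0} and so is not $\alpha_{\Higgs}$-semistable --- which is exactly the point: semistability must enter the proof of the \emph{inequality} itself through non-formal estimates, whereas your outline only invokes semistability afterwards, to control $\bH^0$ in the equality analysis (and bounding $\bH^0$ and $\bH^2$ cannot give $\chi\leq 0$ anyway, since one would also need a lower bound on $\bH^1$). Your proposal is moreover explicitly circular at the point where it says the ``cleaner route is to invoke Theorem \ref{chi},'' i.e.\ the statement being proved. In fact the inequality is deep: the paper remarks that no characteristic-free proof of \cite[Proposition 4.5]{AGPS} is known, so it certainly cannot follow from Riemann--Roch bookkeeping, which would work over any field.

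For comparison, the paper's own treatment is much more modest and entirely different: it does \emph{not} reprove the inequality, but quotes \cite[Proposition 4.5]{AGPS} in its original, weaker form --- for \emph{polystable} chains, with equality forcing $b$ to be only \emph{generically} an isomorphism --- and then supplies two upgrade arguments. First, ``generically an isomorphism'' is improved to ``isomorphism'': if $b$ is generically invertible but not invertible, the complex is quasi-isomorphic to a torsion sheaf placed in degree $1$, so $\chi=-\length<0$, and hence equality still forces $b$ to be an honest isomorphism of sheaves. Second, ``polystable'' is relaxed to ``semistable'': a semistable chain of the given slope is an iterated extension of polystable ones, the $\cHom$-complexes then inherit filtrations whose graded pieces are the $\cHom$-complexes of the polystable constituents, and $\chi$ is additive along these filtrations. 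If you want to salvage your write-up, these two reduction steps (on top of citing, rather than re-deriving, the polystable inequality) are what you need to produce.
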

\begin{remark}
	For the case $r=1$ this result already appears in \cite[Proposition 4.7]{bradlow-garcia-prada-gothen:2004}
\end{remark}

This version stated in the given reference states a slightly weaker result, which however implies the above: As remarked in \cite[Corollary 3.5]{H}, we have replaced the condition of $b$ not being generically an isomorphism by $b$ not being an isomorphism, because if $b$ is generically an isomorphism then the complex is quasi-isomorphic to a complex that is a concentrated in degree $1$ and is a torsion sheaf in this degree. In this case the Euler characteristic is also $<0$. 

Moreover one can also replace the condition that the chains are polystable by the condition that they are semistable, because for chains that are extensions of polystable chains the corresponding $\cHom$ complexes also admit a filtration with graded quotients equal to the ones of the polystable chains and $\chi$ is additive with respect to these filtrations.

Thus we can conclude that $\Ext^1_{\Chain}(\cE_\bullet^1,\dots,\cE_\bullet^k) \neq (0)$ unless for all $i<j$ the complex 
$\cHom^\bullet(\cE_\bullet^i,\cE_\bullet^{j}) \sim 0$ (i.e. is quasi-isomorphic to $0$). This implies that
$\cHom^\bullet(\cE_\bullet^1,\oplus_{j>1} \cE_\bullet^{j}) \sim 0$. Now if the sets 
$$\supp(\rk^1) := \{ i | \rk(\cE_i^1) \neq 0 \} \text{ and } \supp(\rk^{2,\dots k}):= \{ i | \rk( \oplus_{j>1} \cE_i^j ) \neq 0 \}$$
are strings of consecutive integers  we can conclude from \cite[Corollary 3.7]{H} that the numerical invariants of $\oplus_{j>1} \cE^j_\bullet$ are the invariants of a standard subchain of $\cE_\bullet$. Since we already excluded the case that $\alpha_{\Higgs}$ lies on the corresponding wall this cannot happen.

Finally if the sets $\supp(\rk^1), \supp(\rk^{2,\dots k})$ are not connected strings of integers, then the corresponding chains will be  sums of chains supported in different intervals we will write this type of summands as $\cE_\bullet^1 = \cE_\bullet^{1\prime} \cup \cE_\bullet^{1\pprime}$. By semistability each of the summands is semistable of the same slope and there are no extensions between the summands.   Thus if $\cHom^\bullet(\cE_\bullet^1,\oplus_{j>1} \cE_\bullet^{j}) \sim 0$ and $\cE_\bullet^1 =  \cE_\bullet^{1\prime} \cup \cE_\bullet^{1\pprime}$ then also
$\cHom^\bullet(\cE_\bullet^{1\prime},\cE_\bullet^{1\pprime}\oplus_{j>1} \cE_\bullet^{j}) \sim 0$ and similarly for $\cE_\bullet^{1\pprime}\oplus_{j>1} $.  After reordering the summands we can therefore apply the corollary and find a standard subchain that gives a subchain of the same slope and we again find a contradiction.
\end{proof}

\subsection{Triples: the case $r=1$} In view of our main application, we briefly describe the special case $r=1$, in which the chains are of the form $\cE_1 \to \cE_0$.  In this case the conditions \eqref{cond1}-\eqref{cond3} reduce to the inequality in Remark \ref{triplesCs}, $\alpha_{\Higgs}=2g-2$, and Theorem \ref{thm:non-empty-irreducible-alpha-higgs} becomes the statement that

\begin{theorem}
  \label{thm:r=1}
	Assume $g>1$. Let $\un{n}\in \bN^{2},\un{d}\in \bZ^{2}$ and $\alpha= 2g-2$. Then the coarse moduli space $M_{\un{n}}^{\un{d}}$ of $\Chain_{\un{n}}^{\un{d},\alpha_{2g-2}-\sst}$ is irreducible and non-empty if and only if $\un{n},\un{d}$ satisfies condition \eqref{cond0} and $\alpha_{\min}\le 2g-2\le\alpha_{\max}$ where  $\alpha_{\min}$ and $\alpha_{\max}$ are as in Remark \ref{triplesCs}.
\end{theorem}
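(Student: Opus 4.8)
The plan is to observe that Theorem~\ref{thm:r=1} is simply the specialization of Theorem~\ref{thm:non-empty-irreducible-alpha-higgs} to the case $r=1$, so the main task is not a new proof but an explicit translation of the general conditions into the concrete inequalities of Remark~\ref{triplesCs}. First I would note that for $r=1$ a chain is a triple $\cE_1\xra{\phi_1}\cE_0$, the stability parameter is a single real number $\alpha$, and $\alpha_{\Higgs}=(1\cdot(2g-2))=2g-2$. Since $g>1$ we have $2g-2>0$, so the hypotheses of Theorem~\ref{thm:non-empty-irreducible-alpha-higgs} are met and that theorem already asserts that $M_{\un{n}}^{\un{d}}$ is irreducible and non-empty if and only if condition~\eqref{cond0} holds and $\alpha_{\Higgs}=2g-2$ lies in the closure $\overline{\Stab}_{\un{n}}^{\un{d}}$. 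It therefore remains only to unwind what $2g-2\in\overline{\Stab}_{\un{n}}^{\un{d}}$ means when $r=1$.

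Next I would carry out this unwinding. For $r=1$ the only standard subchains and quotients are $0\to\cE_0$, the chain $\cE_1\to\cE_1$ (relevant when $n_1<n_0$), and the standard quotient $\cE_0\to\cE_0$ (relevant when $n_0<n_1$); there are no chains of type (2) or (4) with intermediate indices. Thus conditions~\eqref{cond2} and~\eqref{cond3} collapse, and the finitely many inequalities defining $\Stab_{\un{n}}^{\un{d}}$ reduce exactly to the two bounds $\alpha_{\min}\le\alpha$ and $\alpha\le\alpha_{\max}$ recorded in Remark~\ref{triplesCs}, where $\alpha_{\min}=\frac{d_0}{n_0}-\frac{d_1}{n_1}$, $\alpha_{\max}=\bigl(1+\frac{n_0+n_1}{|n_0-n_1|}\bigr)\bigl(\frac{d_0}{n_0}-\frac{d_1}{n_1}\bigr)$, and $\alpha_{\max}=\infty$ when $n_0=n_1$. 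Passing from $\Stab_{\un{n}}^{\un{d}}$ to its closure $\overline{\Stab}_{\un{n}}^{\un{d}}$ merely replaces the strict inequalities cutting out the open region by non-strict ones, which is why the statement of Theorem~\ref{thm:r=1} features the non-strict bounds $\alpha_{\min}\le 2g-2\le\alpha_{\max}$. Substituting $\alpha=2g-2$ then yields precisely the asserted criterion.

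The one point requiring genuine care — and what I expect to be the main (though minor) obstacle — is the bookkeeping in the boundary case $n_0=n_1$. When the ranks are equal, $\alpha_{\max}=\infty$ so the upper bound is vacuous and the only surviving constraints are condition~\eqref{cond0}, namely $d_1\le d_0$, together with the lower bound $\alpha_{\min}\le 2g-2$; one should check that Remark~\ref{triplesCs} is consistent with this and that no spurious wall is introduced. I would also verify that the definition of $\Stab_{\un{n}}^{\un{d}}$ in Section~\ref{sec:sufficient} carries the built-in requirement $\alpha>\alpha_{\Higgs}$, so that forming the closure and then evaluating at $\alpha_{\Higgs}=2g-2$ correctly captures membership on the boundary rather than excluding it. Once these boundary compatibilities are confirmed, Theorem~\ref{thm:r=1} follows immediately as a corollary of Theorem~\ref{thm:non-empty-irreducible-alpha-higgs} and the explicit computation of $\alpha_{\min},\alpha_{\max}$ already performed in Remark~\ref{triplesCs}.
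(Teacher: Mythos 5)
Your overall strategy --- reading Theorem~\ref{thm:r=1} as the $r=1$ specialization of Theorem~\ref{thm:non-empty-irreducible-alpha-higgs} and unwinding conditions \eqref{cond1}--\eqref{cond3} into the bounds of Remark~\ref{triplesCs} --- is legitimate, and it matches the sentence with which the paper introduces the theorem; your identification of the relevant standard subchains ($0\to\cE_0$; $\cE_1=\cE_1$ when $n_1<n_0$; the quotient $\cE_0=\cE_0$ when $n_0<n_1$) and the resulting bounds $\alpha_{\min}\le\alpha\le\alpha_{\max}$ is correct. The paper, however, does not prove the theorem this way: it sketches a direct three-case argument for triples, namely (i) $2g-2=\alpha_{\min}$, where semistability forces $\phi_1=0$ and $M_{\un{n}}^{\un{d}}$ is a product of two moduli spaces of semistable bundles; (ii) $2g-2=\alpha_{\max}$, where it invokes Theorems~7.7 and~8.15 of \cite{bradlow-garcia-prada-gothen:2004}; and (iii) $\alpha_{\min}<2g-2<\alpha_{\max}$, where it runs the surjectivity-via-extensions argument for $\Chain_{\un{n}}^{\un{d},\alpha_+-\sst}\to M_{\un{n}}^{\un{d}}$, using \cite[Proposition~4.7]{bradlow-garcia-prada-gothen:2004}, the $r=1$ case of Theorem~\ref{chi}.

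The reason this matters, and the one genuine gap in your write-up, is case (ii). You assert that passing from $\Stab_{\un{n}}^{\un{d}}$ to $\overline{\Stab}_{\un{n}}^{\un{d}}$ ``merely replaces the strict inequalities by non-strict ones.'' That fails precisely when $2g-2=\alpha_{\max}$: by definition $\Stab_{\un{n}}^{\un{d}}$ requires $\alpha>\alpha_{\Higgs}=2g-2$ \emph{and} $\alpha\le\alpha_{\max}$, so in this case $\Stab_{\un{n}}^{\un{d}}=\emptyset$ and its topological closure is empty; the literal criterion ``$2g-2\in\overline{\Stab}_{\un{n}}^{\un{d}}$'' would then wrongly exclude a case in which the moduli space is in fact non-empty and irreducible (this is the maximal-Toledo situation, which genuinely occurs and is the one discussed in the rigidity remark later in the paper). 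To close this you must either read $\overline{\Stab}_{\un{n}}^{\un{d}}$ as the region cut out by the relaxed inequalities $\alpha\ge\alpha_{\Higgs}$ together with \eqref{cond1}--\eqref{cond3} --- which is what the proof of the general theorem actually establishes, since it treats walls of type \eqref{cond1}--\eqref{cond3} passing through $\alpha_{\Higgs}$ by a separate induction rather than by a limiting argument --- or supply an independent argument for $2g-2=\alpha_{\max}$, as the paper does via the citation above. Note also that the boundary case you singled out as the delicate one, $n_0=n_1$, is in fact harmless: there $\alpha_{\max}=\infty$, no wall of type \eqref{cond2}/\eqref{cond3} exists, and the surviving constraints are just \eqref{cond0} and $\alpha_{\min}\le 2g-2$, exactly as the theorem states.
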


\noindent In a similar way to Theorem \ref{thm:non-empty-irreducible-alpha-higgs},  Theorem \ref{thm:r=1} can be proved by treating separately three different possibilities for the location of $2g-2$ in the interval $[\alpha_{\min},\alpha_{\max}]$:
\begin{itemize}
\item If $2g-2=\alpha_{\min}$ then a triple  $\cE_1\xra{\phi_1}\cE_0$ is  $2g-2$-semistable if and only if $\cE_0$ and $\cE_1$ are semistable bundles and $\phi_1=0$. The moduli space $M_{\un{n}}^{\un{d}}$ is thus the product of two non-empty irreducible moduli spaces. 
\item If $2g-2=\alpha_{\max}$ then Theorem 7.7 and 8.15 in \cite{bradlow-garcia-prada-gothen:2004} show that $M_{\un{n}}^{\un{d}}$ is non-empty and irreducible. 
\item If $\alpha_{\min}< 2g-2<\alpha_{\max}$, regardless whether $2g-2$ is itself a critical value, we can pick $\epsilon>0$ such that $\alpha^+=2g-2+\epsilon$ is generic with no critical values between $2g-2$ and $\alpha^+$. We consider the map
\begin{equation}\label{surjmap}
\Chain_{\un{n}}^{\un{d},\alpha_{+}-\sst} \to M_{\un{n}}^{\un{d}}.
\end{equation}
 Our goal is to show that this map is surjective since then the fact that $\Chain_{\un{n}}^{\un{d},\alpha_{+}-\sst}$ is non-empty and irreducible will imply that the same is true for $M_{\un{n}}^{\un{d}}$.  We thus consider a triple, say $T$, representing a point $[T]\in M_{\un{n}}^{\un{d}}$ and suppose that it is not $\alpha_+$-semistable.  Then $T$ has a maximally $\alpha_+$-destabilizing sub-triple $T'\subset T$; indeed $T'$ is the first term in the Harder--Narasimhan filtration for $T$.  Assuming that there are non-trivial extensions in which $T/T'$ is a sub-triple and $T'$ the quotient, we can replace $T$ by one such extension, say $\tilde{T}$.  It will follow by construction that $\tilde{T}$ is $\alpha$-semistable and represents the same point as $T$ in $M_{\un{n}}^{\un{d}}$.  Moreover, if $\tilde{T}$ is not $\alpha_+$-semistable then it lies in a ``less unstable'' Harder--Narasimhan-stratum than $T$.  After a finite number of iterations this process produces a representative for $[T]$ which is $\alpha_+$-semistable, thus completing the argument.  The proof thus hinges on the existences of the requisite non-trivial extensions, i.e.\ on the non-vanishing of  $\Ext^1(T', T/T')$.  This is guaranteed by the $r=1$ version of Theorem \ref{chi}, i.e.\ \cite[Proposition 4.7]{bradlow-garcia-prada-gothen:2004}.
\end{itemize}
\section{Connectedness of the moduli space of $\mathsf{U}(p,q)$-Higgs bundles}\label{sec:application_U_pq}

The irreducibility result of the previous section allows us to finally settle the connectedness of the moduli space of $\U(p,q)$-Higgs bundles, completing the results of \cite{bradlow-garcia-prada-gothen:2003} and \cite{bradlow-garcia-prada-gothen:2004}.

Let us start by recalling the setup.
A $\U(p,q)${\em-Higgs bundle} is a collection $$(\cV,\cW,\beta\colon \cW\to \cV\otimes \Omega,\gamma\colon \cV \to \cW\tensor \Omega),$$ where $\cV,\cW$ are vector bundles on $C$ with $\rank(\cV)=p$, $\rank(\cW)=q$. 
 
We will denote by $a=\deg(\cV)$ and $b=\deg(W)$ and denote by $\Higgs_{\U(p,q)}^{a,b}$ the stack of  $\U(p,q)$-Higgs bundles. Any $\U(p,q)$-Higgs bundle defines a Higgs bundle $\cE:= \cV \oplus \cW$ and this induces a notion of semistability for $\U(p,q)$-Higgs bundles. 

We will denote by $\Higgs_{\U(p,q)}^{a,b,\sst}\subseteq \Higgs_{\U(p,q)}^{a,b}$ the open substack of semistable Higgs bundles and by $\cM_{U(p,q)}^{a,b}$ the corresponding coarse moduli space.

By duality we know that $$\Higgs_{\U(p,q)}^{a,b} \cong \Higgs_{\U(p,q)}^{-a,-b}$$ and this isomorphism preserves semistability. We may therefore always assume that $\mu(\cV)\geq\mu(\cW)$.

Let us recall the analytic argument relating these spaces to moduli spaces of triples.
Hitchin showed that there is a proper map (see
\cite[Section~4]{bradlow-garcia-prada-gothen:2003} for details for the
case of $\U(p,q))$-Higgs bundles): 
\begin{align*}
  f\colon\mathcal{M}_{\U(p,q)}^{a,b} &\to \bR,\\
  (\cV,\cW,\beta,\gamma) &\mapsto \norm{\beta}^2+\norm{\gamma}^2.
\end{align*}
Here the norms are $L^2$-norms, taken with respect to hermitian
metrics on $\cV$ and $\cW$ satisfying Hitchin's equations.

Let $\mathcal{N}_{\U(p,q)}^{a,b} \subset \mathcal{M}_{\U(p,q)}^{a,b}$ be the subspace of local minima of $f$.
Then \cite[Theorem 4.6]{bradlow-garcia-prada-gothen:2003} identifies
$\mathcal{N}_{U(p,q)}^{a,b}$ as the subspace of
$(\cV,\cW,\beta,\gamma)$ with $\beta=0$. In turn, there is an isomorphism
\begin{align*}
 \mathcal{N}_{\U(p,q)}^{a,b} &\xra{\simeq} M^{\un{d},\alpha_{\Higgs}}_{\un{n}},\\
  (\cV,\cW,0,\gamma)&\mapsto (\cV\xra{\gamma}\cW\otimes \Omega)
\end{align*}
where $\un{d}=(d_0,d_1)=(b+2g-2,a)$, $\un{n}=(n_0,n_1)=(q,p)$ and
$\alpha_{\Higgs}=2g-2$. In other words, $M^{\un{d},\alpha}_{\un{n}}$ is the
moduli space of $2g-2$-semistable triples.

As $f$ is proper, we know that connectedness of the subspace of local minima
$\mathcal{N}_{\U(p,q)}^{a,b}$ implies connectedness of $\mathcal{M}_{\U(p,q)}^{a,b}$
(see \cite[Proposition~4.2]{bradlow-garcia-prada-gothen:2003}).  The results in  \cite{bradlow-garcia-prada-gothen:2003}) showed only that the restriction of $\mathcal{N}_{\U(p,q)}^{a,b}$ to the stable locus in $\mathcal{M}_{\U(p,q)}^{a,b}$ is connected.  This is sufficient to prove the connectedness of $\overline{\mathcal{M}_{\U(p,q)}^{a,b;\mathrm{stable}}}$ (the closure of the stable locus) but leaves open the possibility of additional connected components in which all points are strictly polystable.  Now, as a consequence of Theorem~\ref{thm:non-empty-irreducible-alpha-higgs} we can rule out this possibility and conclude:

\begin{theorem}
  \label{thm:upq-connected}
  The moduli space $\mathcal{M}_{\U(p,q)}^{a,b}$ is connected.
\end{theorem}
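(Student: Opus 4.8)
The plan is to deduce connectedness from the Morse-theoretic picture of the Hitchin functional together with the irreducibility statement of Theorem~\ref{thm:non-empty-irreducible-alpha-higgs} in its $r=1$ incarnation, Theorem~\ref{thm:r=1}. By the duality $\Higgs_{\U(p,q)}^{a,b}\cong\Higgs_{\U(p,q)}^{-a,-b}$ we may assume throughout that $\mu(\cV)\geq\mu(\cW)$. The function $f\colon\mathcal{M}_{\U(p,q)}^{a,b}\to\bR$, $(\cV,\cW,\beta,\gamma)\mapsto\norm{\beta}^2+\norm{\gamma}^2$, is proper and bounded below by $0$, so it attains its infimum on every connected component of $\mathcal{M}_{\U(p,q)}^{a,b}$, and these infima are local minima. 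Consequently every connected component of $\mathcal{M}_{\U(p,q)}^{a,b}$ meets the subspace $\mathcal{N}_{\U(p,q)}^{a,b}$ of local minima, and by \cite[Proposition~4.2]{bradlow-garcia-prada-gothen:2003} it suffices to show that $\mathcal{N}_{\U(p,q)}^{a,b}$ is connected (or empty, in which case $\mathcal{M}_{\U(p,q)}^{a,b}$ is empty and there is nothing to prove).

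The second step is to replace $\mathcal{N}_{\U(p,q)}^{a,b}$ by a moduli space of triples. Under the assumption $\mu(\cV)\geq\mu(\cW)$, \cite[Theorem~4.6]{bradlow-garcia-prada-gothen:2003} identifies the local minima with the locus $\beta=0$, and the recipe $(\cV,\cW,0,\gamma)\mapsto(\cV\xra{\gamma}\cW\otimes\Omega)$ gives an isomorphism $\mathcal{N}_{\U(p,q)}^{a,b}\xra{\simeq}M_{\un{n}}^{\un{d}}$ onto the coarse moduli space of $(2g-2)$-semistable triples with $\un{n}=(q,p)$ and $\un{d}=(b+2g-2,a)$, where $\alpha_{\Higgs}=2g-2$. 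Here semistability of the $\U(p,q)$-Higgs bundle translates exactly into $(2g-2)$-semistability of the associated triple, so that \emph{all} of $\mathcal{N}_{\U(p,q)}^{a,b}$, including the strictly polystable points, is captured; this is precisely the point at which the present argument improves on \cite{bradlow-garcia-prada-gothen:2003}, which only controlled the image of the stable locus.

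It then remains to invoke Theorem~\ref{thm:r=1}. If $\mathcal{M}_{\U(p,q)}^{a,b}$ is non-empty, then so is $\mathcal{N}_{\U(p,q)}^{a,b}$ (the minimum of $f$ being attained), whence $M_{\un{n}}^{\un{d}}\neq\emptyset$; by Theorem~\ref{thm:r=1} this forces condition \eqref{cond0} together with the inequalities $\alpha_{\min}\leq 2g-2\leq\alpha_{\max}$ of Remark~\ref{triplesCs}, and under these same hypotheses the theorem asserts that $M_{\un{n}}^{\un{d}}$ is irreducible. An irreducible variety is connected, so $\mathcal{N}_{\U(p,q)}^{a,b}$ is connected, and combining this with the first step we conclude that $\mathcal{M}_{\U(p,q)}^{a,b}$ is connected.

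The genuinely substantial input is entirely contained in Theorem~\ref{thm:r=1} (the $r=1$ case of Theorem~\ref{thm:non-empty-irreducible-alpha-higgs}): the irreducibility of the moduli space of triples at the boundary stability value $\alpha=2g-2$, proved via the S-equivalence argument of Section~\ref{sec:irreducibility-chains} and the non-vanishing of $\Ext^1$ supplied by Theorem~\ref{chi}. The Morse theory reducing connectedness of $\mathcal{M}_{\U(p,q)}^{a,b}$ to that of its minimum subspace is already available from \cite{bradlow-garcia-prada-gothen:2003}, so the only real obstacle — handling components consisting entirely of strictly polystable objects — is overcome precisely because Theorem~\ref{thm:r=1} describes the full coarse moduli space of triples rather than only its stable part. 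I expect no difficulty beyond bookkeeping of the numerical invariants $(\un{n},\un{d})$ and checking that the normalization $\mu(\cV)\geq\mu(\cW)$ is compatible with the identification of the minima.
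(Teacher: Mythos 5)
Your proposal is correct and follows essentially the same route as the paper: duality to normalize $\mu(\cV)\geq\mu(\cW)$, properness of the Hitchin functional $f$ to reduce connectedness of $\mathcal{M}_{\U(p,q)}^{a,b}$ to that of the locus of local minima $\mathcal{N}_{\U(p,q)}^{a,b}$ via \cite[Proposition~4.2]{bradlow-garcia-prada-gothen:2003}, identification of $\mathcal{N}_{\U(p,q)}^{a,b}$ with the coarse moduli space of $(2g-2)$-semistable triples via \cite[Theorem~4.6]{bradlow-garcia-prada-gothen:2003}, and then irreducibility of that coarse moduli space from Theorem~\ref{thm:non-empty-irreducible-alpha-higgs} (in its $r=1$ form, Theorem~\ref{thm:r=1}). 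You also correctly pinpoint that the gain over \cite{bradlow-garcia-prada-gothen:2003} is precisely that the irreducibility statement covers the full coarse moduli space including strictly polystable points, which is exactly the paper's reasoning.
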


\begin{remark}
   As is common in this circle of problems, one could replace the analytic argument above by the result that the $\bC^*$ on $\mathcal{M}_{\U(p,q)}^{a,b}$ has proper fixed point sets.
\end{remark}

\begin{remark}
  Our results imply that there are no components of the moduli space
  consisting entirely of strictly semistable $\U(p,q)$-Higgs bundles,
  except in the case of rigidity when $p\neq q$ and the Toledo
  invariant is maximal (cf.~\cite{bradlow-garcia-prada-gothen:2003}).
  For such a component, the representations in the corresponding
  components of the character variety all factor through a proper
  subgroup of $\U(p,q)$ and thus do not have Zariski dense image. This
  would have been surprising in view of results of Kim and Pansu
  \cite{kim-pansu} on the deformability of surface group
  representations into representations with Zariski dense image
  (except in the aforementioned case and an analogous one for the
  group $\mathsf{SO}^*(4m+2)$).  The
  Kim--Pansu results apply to representations into all real forms of
  $\mathsf{SL}(n,\mathbb{C}), \mathsf{O}(n,\mathbb{C})$ or
  $\mathsf{Sp}(2n,\mathbb{C})$ but hold only for curves of high genus
  compared to the dimension of the target group. They thus do not
  entirely rule out the possibility of such anomalous components for
  surface group representations of low genus curves.  Our results show
  that if such components exist then, at least in the case of
  $\SU(p,q)$ representations, the explanation is not related to a lack
  of stable points in the corresponding components of the Higgs bundle
  moduli space.
\end{remark}

\section{Application to the irreducible components of the global nilpotent cone}


As noted in the introduction, knowledge about moduli spaces of chains is known to imply results on the irreducible components of the $0$-fiber of Hitchin's fibration. Let us recall this relation. As in \cite{H} we denote by $\Higgs_n^{d,\sst}$ the moduli stack of semistable Higgs bundles of rank $n$ and degree $d$ on $C$, i.e., the stack of pairs $(\cE,\theta)$ where $\cE$ is a vector bundle of rank $n$ and degree $d$ and $\theta \colon \cE \to \cE\tensor\Omega$ a morphism of $\cO_C$-modules. The coarse moduli space of semistable Higgs bundles will be denoted $M_{Dol,n}^d$ and $h\colon M_{Dol,n}^d \to \cA=\oplus H^0(C,\Omega^i)$ is the Hitchin fibration. 

The fiber $h^{-1}(0)$ is called global nilpotent cone. We already recalled that the fixed points for the action of $\bC^*$ on $M_{Dol,n}^d$ are moduli spaces of chains. We denote by $F_{\un{n},\un{d}}\subset h^{-1}(0)$ the corresponding subsets of the moduli of Higgs bundles of the form $\cE=\oplus \cE_i, \theta=\oplus \theta_i$, where $\theta_i\colon \cE_i \to \cE_{i-1}\tensor \Omega$ as in Remark \ref{rem:Higgs}. Finally we denote by $F_{\un{n},\un{d}}^-$ the subschemes of those points $(\cE,\theta)$ for which $\lim_{t\to \infty} (\cE,t\theta) \in F_{\un{n},\un{d}}$. As $h^{-1}(0)$ is projective these strata define a decomposition $h^{-1}(0)=\cup F_{\un{n},\un{d}}^-$. 

If $(n,d)$ are coprime this is the Bia\l ynicki-Birula decomposition
and therefore all of the strata $F_{\un{n},\un{d}}^-$ are smooth and
known to be of dimension $n^2(g-1)+1$. Their closures are thus the irreducible components \cite[Proposition 9.1]{HT}.
	
For general $(n,d)$ the strata $F_{\un{n},\un{d}}^-$ can be singular, so we have to argue more carefully. By \cite{LaumonNilp} and \cite{GinzburgNilp}, the fibers of $h$ when considered in $\Higgs_n^{d,\sst}$ are of pure dimension $n^2(g-1)+1$. This implies that the irreducible components in $h^{-1}(0)$ cannot have larger dimension and since dimension is upper semicontinuous they also have to be of pure dimension $n^2(g-1)+1$.

By \cite[Proposition 2.6]{H} the strata $F_{\un{n},\un{d}}^-$ have a
partial ordering. Namely defining $\wt(\un{n},\un{d}):=  -2\sum_{i<j}
(j-i) n_in_j (\frac{d_j}{n_j}-\frac{d_i}{n_i}).$ we know that for any
$(\un{n},\un{d})$ the subvariety $$\bigcup_{{(\un{m},\un{e}) \atop
    \wt(\un{m},\un{e}) \geq \wt(\un{d},\un{n})}} F_{\un{m},\un{e}}^-$$ is closed. 
Moreover, by \cite[Corollary 2.13]{H} this is also a union of irreducible components of $h^{-1}(0)$.  Therefore for each $(\un{n},\un{d})$ such that $F_{\un{n},\un{d}}\neq \emptyset$ the closure of $F_{\un{n},\un{d}}^-$ constitutes at least one irreducible component of $h^{-1}(0)$. 
			 
\begin{corollary}\label{cor:irredcomp}
	For $g>1$ and any $(n,d)\in \bN \times \bZ$ there are irreducible components of $h^{-1}(0)$ contained in the closure of $F_{\un{n},\un{d}}^-$ if and only if the corresponding numerical invariants of chains satisfy the conditions \eqref{cond0},\eqref{cond1},\eqref{cond2},\eqref{cond3}. If $(n,d)$ are coprime, the $F_{\un{n},\un{d}}^-$ are irreducible.
\end{corollary}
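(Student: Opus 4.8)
The plan is to connect the combinatorial conditions (C0)--(C3) on chain invariants to the geometry of the strata $F_{\un{n},\un{d}}^-$ via the dimension count already assembled. I would proceed as follows.

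\textbf{Step 1: Reduce the ``if and only if'' to a statement about nonemptiness of chain moduli.} By Remark~\ref{rem:Higgs}, the fixed-point set $F_{\un{n},\un{d}}$ is precisely the moduli space of $\alpha_{\Higgs}$-semistable chains with the given invariants (the $\bC^*$-fixed Higgs bundles are exactly chains, with $\alpha_{\Higgs}=2g-2$ encoding the twist by $\Omega$). Thus $F_{\un{n},\un{d}}\neq\emptyset$ if and only if $M_{\un{n}}^{\un{d},\alpha_{\Higgs}}\neq\emptyset$, and by Theorem~\ref{thm:non-empty-irreducible-alpha-higgs} this happens if and only if (C0) holds and $\alpha_{\Higgs}\in\overline{\Stab}_{\un{n}}^{\un{d}}$, i.e.\ the conditions \eqref{cond1},\eqref{cond2},\eqref{cond3} are satisfied (their closed form, since $\alpha_{\Higgs}$ may lie on the boundary). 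So the first claim is: there is an irreducible component of $h^{-1}(0)$ inside $\overline{F_{\un{n},\un{d}}^-}$ if and only if $F_{\un{n},\un{d}}\neq\emptyset$, which is equivalent to \eqref{cond0}--\eqref{cond3}.

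\textbf{Step 2: Translate nonemptiness of $F_{\un{n},\un{d}}$ into the existence of a component.} This direction is already done in the paragraph preceding the corollary: using the weight $\wt(\un{n},\un{d})$, the partial ordering from \cite[Proposition 2.6]{H}, and the purity of dimension $n^2(g-1)+1$ coming from \cite{LaumonNilp,GinzburgNilp}, one shows that whenever $F_{\un{n},\un{d}}\neq\emptyset$ the closure $\overline{F_{\un{n},\un{d}}^-}$ contains at least one irreducible component of $h^{-1}(0)$. For the converse, if $F_{\un{n},\un{d}}=\emptyset$ then $F_{\un{n},\un{d}}^-=\emptyset$ as well (a point flowing to an empty limit set cannot exist), so $\overline{F_{\un{n},\un{d}}^-}$ is empty and contains no component. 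Hence the existence of a component in $\overline{F_{\un{n},\un{d}}^-}$ is \emph{equivalent} to $F_{\un{n},\un{d}}\neq\emptyset$, which by Step~1 is equivalent to the numerical conditions. I expect the main subtlety here to be confirming that the dimension estimate is sharp enough that nonempty strata genuinely contribute a top-dimensional component rather than being absorbed into the closure of a higher-weight stratum; this is exactly what \cite[Corollary 2.13]{H} provides, so I would lean on it directly.

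\textbf{Step 3: The coprime case.} When $\gcd(n,d)=1$, semistability equals stability for every relevant sub-object, so there are no strictly semistable Higgs bundles in $M_{Dol,n}^d$ and the $\bC^*$-action has no fixed points with nontrivial automorphisms beyond scalars. Consequently the decomposition $h^{-1}(0)=\bigcup F_{\un{n},\un{d}}^-$ is the genuine Bia\l ynicki--Birula decomposition of the smooth variety $M_{Dol,n}^d$, whose strata are locally closed smooth affine-bundle-like pieces; in particular each nonempty $F_{\un{n},\un{d}}^-$ is smooth and irreducible of dimension $n^2(g-1)+1$, and its closure is an irreducible component, precisely as recorded via \cite[Proposition 9.1]{HT}. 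I would simply invoke smoothness of $M_{Dol,n}^d$ in the coprime case together with the Bia\l ynicki--Birula theorem to conclude irreducibility of each stratum. The only real work is bookkeeping: matching the indexing of strata by $(\un{n},\un{d})$ against the chain invariants and checking that each fixed-point component $F_{\un{n},\un{d}}$ is itself irreducible, which follows from Theorem~\ref{thm:non-empty-irreducible-alpha-higgs} in the coprime setting where $\alpha_{\Higgs}$-semistability is automatically $\alpha_{\Higgs}$-stability.
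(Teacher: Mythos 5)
Your proposal is correct and takes essentially the same route as the paper's own argument: you identify the fixed-point loci $F_{\un{n},\un{d}}$ with the coarse moduli of $\alpha_{\Higgs}$-semistable chains and apply Theorem~\ref{thm:non-empty-irreducible-alpha-higgs} for the numerical criterion, use the purity and weight-ordering inputs (\cite{LaumonNilp}, \cite{GinzburgNilp}, \cite[Proposition 2.6 and Corollary 2.13]{H}) to get a component from each nonempty stratum, and handle the coprime case via the Bia\l ynicki--Birula decomposition and \cite[Proposition 9.1]{HT} together with irreducibility of the fixed-point chain moduli. No gaps beyond the bookkeeping you already flag.
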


\end{document}